\newcommand*\circled[1]{\tikz[baseline=(char.base)]{
   \node[shape=circle,draw,inner sep=1pt] (char) {#1};}}
\newtheorem{thm}{Theorem}[section]
\newtheorem{lem}[thm]{Lemma}
\newtheorem{cor}[thm]{Corollary}
\newtheorem{question}{Question}
\def\F{{\mathbb{F}}}
\def\N{{\mathbb{N}}}
\def\cM{{\cal M}}
\def\_{\,\,\,\,\,}
\def\rank{\textsf{rank}}
\def\VC{\textsf{VC-dim}}
\def\intdeg{\textsf{int-deg}}
\def\symdif{{\triangle}}
\newcommand{\eps}{\epsilon}
\newcommand{\remove}[1]{}
\begin{document}

\title{A Sauer-Shelah-Perles Lemma for Sumsets}

\author{Zeev Dvir\thanks{Department of Computer Science and Department of Mathematics,
Princeton University.
Email: \texttt{zeev.dvir@gmail.com}. Research supported by NSF CAREER award DMS-1451191 and NSF grant CCF-1523816 } \and
Shay Moran\thanks{School of Mathematics, IAS. Email: \texttt{shaymoran1@gmail.com}.
Research supported by the National Science Foundation under agreement No. CCF-1412958 and by the Simons Foundations.}}

\date{}
\maketitle

\begin{abstract}
We show that any family of subsets  $A\subseteq 2^{[n]}$
satisfies $\lvert A\rvert \leq O\bigl(n^{\lceil{d}/{2}\rceil}\bigr)$, 
where $d$ is the VC dimension of $\{S\triangle T \,\vert\, S,T\in A\}$, and $\triangle$ is the symmetric difference operator.
We also observe that replacing $\symdif$ by either $\cup$ or $\cap$ fails to satisfy an analogous statement. 
Our proof is based on the polynomial method; specifically, on an argument due to [Croot, Lev, Pach '17].
\end{abstract} 

\section{Introduction}

Let $A \subset 2^{[n]}$ be a family of subsets of an $n$ element set ($[n]$ w.l.o.g). 
The VC dimension of $A$, denoted by $\VC(A)$, is the size of the largest $Y\subseteq [n]$
such that $\{S\cap Y \,\vert\, S\in A\} = 2^{Y}$.
One of the most useful facts about the VC dimension is given by the Sauer-Shelah-Perles Lemma.
\begin{thm}[Sauer-Shelah-Perles Lemma \citep{sauer72density,shelah72comb}]\label{thm:sauer}
Let $d\leq n\in\N$. 
Suppose $A \subset 2^{[n]}$ satisfies $\VC(A) \leq d$. Then $\lvert A\rvert \leq {n \choose \leq d}$.	 
\end{thm}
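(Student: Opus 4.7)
The plan is to deduce the theorem from Pajor's shattering inequality, which asserts that for any $A\subseteq 2^{[n]}$, the size of $A$ is at most the number of subsets of $[n]$ it shatters. Writing $\text{sh}(A) = \{Y\subseteq [n] : \{S\cap Y \mid S\in A\} = 2^Y\}$, this reads $\lvert A\rvert \leq \lvert \text{sh}(A)\rvert$. Granting it, the theorem is immediate: the hypothesis $\VC(A)\leq d$ forces every $Y\in\text{sh}(A)$ to have $\lvert Y\rvert\leq d$, hence $\lvert A\rvert \leq \lvert\text{sh}(A)\rvert \leq \binom{n}{\leq d}$.

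I would prove Pajor's inequality by induction on $n$. The base case $n=0$ is trivial. For the inductive step, split $A$ using the last coordinate: set $A_0 = \{S\in A : n\notin S\}$ and $A_1 = \{S\setminus\{n\} : S\in A,\ n\in S\}$, both regarded as families over $[n-1]$, so that $\lvert A\rvert = \lvert A_0\rvert + \lvert A_1\rvert$. The inductive hypothesis then gives $\lvert A_i\rvert \leq \lvert\text{sh}(A_i)\rvert$ for $i=0,1$.

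To close the induction I need $\lvert\text{sh}(A_0)\rvert + \lvert\text{sh}(A_1)\rvert \leq \lvert\text{sh}(A)\rvert$. The key observation is that every $Y\in \text{sh}(A_0)\cup \text{sh}(A_1)$ is itself shattered by $A$ (and avoids $n$), while every $Y\in \text{sh}(A_0)\cap\text{sh}(A_1)$ satisfies $Y\cup\{n\}\in\text{sh}(A)$ (using $A_0$ to realize subsets of $Y\cup\{n\}$ missing $n$, and $A_1$ to realize those containing it). These two collections of shattered sets are disjoint subsets of $\text{sh}(A)$, and inclusion-exclusion converts the sum of their sizes into $\lvert\text{sh}(A_0)\rvert + \lvert\text{sh}(A_1)\rvert$, yielding the required inequality.

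The main conceptual step is identifying the right strengthening: directly inducting on $\lvert A\rvert$ does not recurse cleanly, but passing to the count of shattered sets does, because shattering behaves well under the partition of $A$ by the last coordinate. Once this invariant is in place, the combinatorics above is routine, and one could alternatively obtain the same bound via Frankl's shifting operators, replacing the counting argument by the fact that downward-closed families have $\lvert A\rvert = \lvert\text{sh}(A)\rvert$.
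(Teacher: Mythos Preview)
Your proof is correct: the reduction to Pajor's inequality is sound, and your inductive argument for Pajor's inequality is the standard one, with all the details in order (in particular, $\lvert A\rvert=\lvert A_0\rvert+\lvert A_1\rvert$ holds because removing $n$ is a bijection on the sets of $A$ that contain $n$, and the disjointness of the two families of shattered sets you exhibit is exactly what is needed to apply inclusion--exclusion).

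As for the comparison: the paper does not prove \Cref{thm:sauer} at all. It is quoted as a classical result with citations to Sauer and Shelah, and is used only as background (to derive the naive $O(n^d)$ bound that \Cref{thm-main} improves). So there is no ``paper's own proof'' to compare against; your write-up simply supplies one of the standard proofs of a result the paper takes for granted.
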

The Sauer-Shelah-Perles Lemma has numerous applications ranging from model theory, probability theory,
geometry, combinatorics, and various fields in computer science.
A simple-yet-useful corollary of this lemma is that if $\VC(A) \leq d$, and $\star$ is any binary set-operation (e.g.\ $\star\in\{\cap,\cup,\symdif\}$)
then 
\[\bigl\lvert\{ S\star T \,\vert\, S,T\in A\}\bigr\rvert \leq  {n \choose \leq d}\cdot{n \choose \leq d} = O(n^{2d}).\]
This corollary is used, for example, by~\cite{blumer89VC} to derive closure properties for {\it PAC learnability}.
Let~$A{\circled{$\star$}} A$ denote the family $\{ S\star T \,\vert\, S,T\in A\}$.
In this work we explore the converse direction:
Does an upper bound on the VC-dimension $\VC(A\circled{$\star$} A)$ imply an upper bound on~$\lvert A\rvert$? 
It is not hard to see that  $\VC(A)\leq\VC(A\circled{$\star$} A)$ for $\star\in\{\cup,\cap,\symdif\}$,
and therefore, by \Cref{thm:sauer}: $\VC(A\circled{$\star$} A) < d\implies \lvert A\rvert\leq O(n^d)$. 

Our main result quadratically improves this naive bound when $\star$ is symmetric difference:
\begin{thm}\label{thm-main}
Let $d\leq n\in\N$. 
Suppose $A \subset 2^{[n]}$ satisfies $\VC(A \circled{$\symdif$} A) \leq d$. 
Then 
\[\lvert A\rvert \leq 2 {n \choose \leq \lfloor {d}/{2}\rfloor}.\]	 
\end{thm}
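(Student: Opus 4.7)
The plan is to apply the polynomial method in the style of Croot, Lev, and Pach. I identify each subset of $[n]$ with its indicator vector in $\F_2^n$, so that $A \subseteq \F_2^n$; the symmetric difference then becomes addition in $\F_2^n$, and $A\circled{$\symdif$} A = A + A := \{a+b : a,b\in A\}$. In particular $0 \in A+A$, since $a+a = 0$ in characteristic two.

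The central idea is to construct a matrix $M \in \F_2^{A \times A}$ of the form $M_{a,b} = P(a+b)$, for a judiciously chosen polynomial $P$, and to bound $\rank(M)$ in two different ways. For the lower bound, I would pick $P$ to be a multilinear polynomial of degree $\leq d$ with $P(0)=1$ and $P(s)=0$ for every $s \in (A+A)\setminus\{0\}$; then $M_{a,b} = 1 \iff a+b = 0 \iff a = b$, so $M = I$ and $\rank(M) = \lvert A\rvert$. The existence of such a $P$ is the delicate step: it requires the strengthening of \Cref{thm:sauer} asserting that whenever $\VC(B) \leq d$, every function $B \to \F_2$ is the restriction of some multilinear polynomial of degree $\leq d$. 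This strengthening is standard (it can be derived, e.g., from Pajor's lemma together with the fact that monomials indexed by shattered sets are linearly independent when restricted to $B$), and I would either cite it or give a short self-contained derivation. Applied to $B = A+A$ with the indicator of $\{0\}$, it supplies the desired $P$.

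For the upper bound on $\rank(M)$, I use the Croot-Lev-Pach degree-splitting. Expanding a multilinear monomial via $\prod_{i\in S}(x_i + y_i) = \sum_{T\subseteq S} x^T y^{S\setminus T}$, any multilinear polynomial of degree $\leq d$ rewrites as
\[
P(x+y) \;=\; \sum_{|T|\leq \lfloor d/2 \rfloor} x^T F_T(y) \;+\; \sum_{|U|\leq \lfloor d/2 \rfloor} y^U G_U(x),
\]
because every pair $(T,U)$ arising in the expansion satisfies $|T|+|U|\leq d$, and so $\min(|T|,|U|)\leq \lfloor d/2\rfloor$ (terms where both sides qualify may be placed on either side). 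Restricted to $A\times A$, each summand is a rank-$1$ outer product of a column in $a$ and a row in $b$, so
\[
\rank(M) \;\leq\; 2{n \choose \leq \lfloor d/2 \rfloor}.
\]
Combining with $\rank(M) = \lvert A\rvert$ yields the theorem.

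The principal obstacle is the first step: verifying the strong Sauer-Shelah-Perles statement needed to produce $P$. The rest—the matrix construction, the Croot-Lev-Pach splitting, and the rank count—is mechanical once that polynomial is available.
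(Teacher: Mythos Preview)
Your proposal is correct and follows essentially the same route as the paper: the ``strong Sauer--Shelah--Perles'' you invoke is exactly the paper's Lemma~\ref{thm:intdegvc} ($\intdeg(B)\leq\VC(B)$), and with that in hand your matrix argument and Croot--Lev--Pach splitting match the paper's proof of Theorem~\ref{thm-intdeg} (the paper phrases it contrapositively and cites CLP as a black box, but the content is the same).
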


We note that \Cref{thm-main} does not hold when $\star\in\{\cup,\cap\}$: pick $d\geq 2$, and set
\[ A = \{ S \subseteq [n] \,\vert\, \lvert S\rvert \leq d\}.\]
Note that $A = A\circled{$\cap$} A $ and therefore $d =\VC(A)= \VC(A\circled{$\cap$} A)$.
However $\lvert A \rvert = {n \choose \leq d} = \Theta(n^d)$, which is not upper bounded by $O(n^{\lceil {d}/{2}\rceil})$.
Picking $A = \{ S \subseteq [n] \,\vert\, \lvert S\rvert \geq n-d\}$ shows that $\cup$ behaves similarly like $\cap$
in this context.

The above examples rules out the analog of \Cref{thm-main} for exactly one of $\cup,\cap$.
This suggests the following open question:
\begin{question}
Let $d\leq n\in\N$. 
Suppose $A \subset 2^{[n]}$ satisfies $\VC(A\circled{$\cap$} A) \leq d$ and $\VC(A\circled{$\cup$} A) \leq d$. 
Is it necessarily the case that $\lvert A\rvert  \leq n^{{d}/{2} + O(1)}$?	 
\end{question}

Another natural question is whether this phenomenon extends to several applications of the symmetric difference operator, for example:
\begin{question}
Does there exist an $\eps < 1/2$ such that for every $d\leq n$ and every $A\subset 2^{[n]}$:
\[ \VC\bigl(A\circled{$\triangle$} A \circled{$\symdif$} A\bigr)\leq d \implies \lvert A\rvert \leq n^{\eps \cdot d + O(1)}?\]
\end{question}
{In Section~\ref{sec:modp} we derive a related statement when $\symdif$ is replaced by addition modulo $p$ for a prime~$p$,
and the VC dimension is replaced by the interpolation degree (which is defined in the next section).}



\subsection{Interpolation degree}
Since our proof method is algebraic, it is convenient to view $A\subset 2^{[n]}$ as a subset of the $n$-dimensional vector space $\F_2^n$ over the field of two elements.  In this setting $A \circled{$\symdif$}A$ is the {\em sumset} of $A$, denoted~$A+A$.

Theorem~\ref{thm-main} will  follow from a stronger statement involving a quantity referred to in some places as the {\em regularity} (as a special case of Castelnuovo-Mumford regularity from algebraic geometry)~\citep{Remscrim16} and in other as the {\em interpolation-degree} \citep{moran16shattered}. We will use the more descriptive interpolation-degree for the rest of this paper. We begin with some preliminary notations and definitions.
 
Let $A \subset \F_2^n$. 
It is a basic fact that for each function $f : \F_2^n \mapsto \F_2$ 
there exists a unique multilinear polynomial $P_f \in \F_2[x_1,\ldots,x_n]$ 
such that $f(a) = P_f(a)$ for all $a\in \F_2^n$ (existence is via simple interpolation and uniqueness follows from dimension counting). 
For a partial function $f : A \mapsto \F_2$ there are  many  (precisely $2^{{2^n - |A|}}$) 
multilinear polynomials whose restriction to $A$ computes $f$. 
Let $\deg_A(f)$ denote the minimal degree of any polynomial whose restriction to $A$ computes $f$. 
We define the {\em interpolation-degree} of $A$, 
denoted $\intdeg(A)$ to be the maximum of $\deg_A(f)$ taken over all functions $f : A \mapsto \F_2$. 
In other words, $\intdeg(A)$ is the smallest $d$ such that 
any function from $A$ to $\F_2$ can be realized by a polynomial of degree at most $d$. 
Clearly, $\intdeg(A)$ is an integer between $0$ and $n$. 
It is also not hard to see that, if $A$ is a proper subset of ${\F_2^n}$ then $\intdeg(A) < n$. 
Our interest in $\intdeg(A)$ comes from the following connection to VC-dimension. 
\begin{lem}[\cite{babai92book,gurvits97linear,smolensky97vc,moran16shattered}]\label{thm:intdegvc}
For $A \subset \F_2^n$ we have $\intdeg(A) \leq \VC(A)$. 
\end{lem}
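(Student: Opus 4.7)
The plan is to show directly that every function $f:A\to \F_2$ is computed by some multilinear polynomial of degree at most $d:=\VC(A)$. For each $k\geq 0$, let $V_k(A)\subseteq \F_2^A$ be the image under the restriction-to-$A$ map of all multilinear polynomials in $\F_2[x_1,\dots,x_n]$ of degree at most $k$. These form a nested chain, and $V_n(A)=\F_2^A$, since any function on $A$ extends to a multilinear polynomial on $\F_2^n$ (indeed, to a unique one on $\F_2^n\setminus A$'s complement). Hence it suffices to show $V_d(A)=\F_2^A$, which I would do by proving, by induction on $|S|$, that every monomial restriction $x_S|_A$ lies in $V_d(A)$.

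The base case $|S|\leq d$ is immediate since $x_S\in V_d$ directly. For the inductive step, assume $|S|>d$. By the definition of VC dimension, $S$ is not shattered by $A$, so there exists a ``missing pattern'' $T\subseteq S$ such that no $a\in A$ satisfies $a\cap S=T$. I would exploit this combinatorial fact algebraically via the polynomial
\[
P_T(x) \;=\; \prod_{i\in T} x_i \cdot \prod_{j\in S\setminus T} (1+x_j),
\]
which over $\F_2$ evaluates to $1$ precisely on those $a\in\F_2^n$ with $a\cap S=T$, and therefore satisfies $P_T|_A\equiv 0$.

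Expanding in the multilinear basis gives
\[
P_T \;=\; \sum_{R\subseteq S\setminus T} x_{T\cup R}.
\]
The unique top-degree term is $x_S$, obtained by taking $R=S\setminus T$; every other summand is a monomial $x_{T\cup R}$ of degree strictly less than $|S|$. Using $P_T|_A\equiv 0$ I can then solve this identity for $x_S|_A$ as a sum of restrictions $x_{T\cup R}|_A$ with $|T\cup R|<|S|$. Each of these lies in $V_d(A)$ by the inductive hypothesis, so $x_S|_A\in V_d(A)$, completing the induction and hence the proof.

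The main obstacle, and essentially the only substantive step, is the construction of the certificate polynomial $P_T$: one has to turn the combinatorial statement ``$S$ is not shattered'' into a concrete algebraic identity on $A$ whose leading monomial is exactly $x_S$. The key feature is that the existence of a missing \emph{pattern} $T\subseteq S$, not just a missing single bit, is precisely what allows $P_T$ to annihilate $A$ while isolating $x_S$ as its top term. Everything else is routine degree-induction bookkeeping.
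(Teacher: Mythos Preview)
Your proof is correct and is essentially identical to the paper's sketch: both arguments reduce to representing each monomial $x_S$ with $|S|>d$ by lower-degree monomials using the indicator polynomial of a missing pattern on $S$ (your $P_T$ is exactly the paper's $\prod_{i\in S}(x_i+v_i+1)$ under the correspondence $T=\{i\in S:v_i=1\}$), followed by induction on $|S|$. The only difference is presentational---you spell out the spaces $V_k(A)$ and the expansion of $P_T$ more explicitly.
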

This Lemma, under various formulations, was proved in several works.
The formulation that appears here can be found in \citep{moran16shattered}.
For completeness, we next sketch the proof:
since the set of all multilinear monomials (also those of degree larger than $\VC(A)$)
span the set of functions $f: A\to\F_2$, 
it suffices to show that any monomial (when seen as an $A\to \F_2$ function) 
can be represented a polynomial of degree at most $d=\VC(A)$. 
The crucial observation is that if $x_S = \pi_{i\in S} x_i$ is a monomial of degree larger than $d$,
then $S$ is not shattered by $A$.
This means that there is a pattern $v:S\to\{0,1\}$ that does not appear
in any of the vectors in $A$ and therefore  
\[ \Pi_{i\in S}(x_i + v_i + 1) =_{A} 0,\]
where ``$=_A$'' means equality as functions over $A$.
Now, expanding this product and rearranging the equation 
yields a representation of $x_S$ as sum of monomials~$x_{S'}$,
where $S' \subset S$, which by induction can also be represented
by polynomials of degree at most $d$.

\Cref{thm:intdegvc} reduces \Cref{thm-main} to the following stronger statement that is proved in the next section.
\begin{thm}\label{thm-intdeg}
Let $d\leq n\in\N$, and let $A \subset \F_2^n$ satisfy $|A| > 2{n \choose \leq \lfloor{d}/{2} \rfloor}$. Then $\intdeg(A+A) > d$.	
\end{thm}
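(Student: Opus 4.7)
The plan is to follow the Croot–Lev–Pach ``polynomial-times-polynomial-equals-low-rank'' strategy, using the diagonal function on $A+A$ in place of an indicator of a progression.

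First I would argue by contrapositive: assume $\intdeg(A+A) \le d$ and deduce $\lvert A\rvert \le 2\binom{n}{\le \lfloor d/2\rfloor}$. Since $A$ is nonempty, $0 \in A+A$ (because $a+a = 0$ in $\F_2^n$). By the assumption on the interpolation degree, the indicator function $\delta_0 : A+A \to \F_2$ (taking value $1$ at $0$ and $0$ elsewhere) can be realized by some multilinear polynomial $P \in \F_2[x_1,\dots,x_n]$ of total degree at most $d$. Consider the $\lvert A\rvert \times \lvert A\rvert$ matrix $M$ with entries $M_{x,y} = P(x+y)$ for $x,y \in A$. Because $x+y = 0$ in $\F_2^n$ iff $x = y$, the matrix $M$ is the identity, so $\rank(M) = \lvert A \rvert$.

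Next I would bound $\rank(M)$ from above by expanding $P$ in the variables $x+y$. Each monomial $\prod_{i \in S}(x_i+y_i)$ with $\lvert S \rvert \le d$ expands as $\sum_{T \subseteq S} x^T y^{S \setminus T}$, so $P(x+y) = \sum_{U,V} c_{U,V} \, x^U y^V$ with the sum ranging over pairs $(U,V)$ of disjoint subsets of $[n]$ satisfying $\lvert U\rvert + \lvert V\rvert \le d$. The key combinatorial observation is that any such pair must satisfy $\min(\lvert U\rvert, \lvert V\rvert) \le \lfloor d/2 \rfloor$, since otherwise $\lvert U\rvert + \lvert V\rvert \ge 2\lfloor d/2 \rfloor + 2 > d$. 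This lets me split
\[ P(x+y) = Q_1(x,y) + Q_2(x,y), \]
where $Q_1$ gathers all terms with $\lvert U\rvert \le \lfloor d/2\rfloor$ and $Q_2$ gathers the remaining terms (which all have $\lvert V\rvert \le \lfloor d/2\rfloor$).

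Now $Q_1(x,y) = \sum_{\lvert U\rvert \le \lfloor d/2\rfloor} x^U \cdot R_U(y)$ for some functions $R_U$, so the matrix with entries $Q_1(x,y)$, viewed as a matrix over $\F_2$ indexed by $A \times A$, factors as a product of an $\lvert A \rvert \times \binom{n}{\le \lfloor d/2\rfloor}$ matrix and a $\binom{n}{\le \lfloor d/2\rfloor} \times \lvert A\rvert$ matrix, and hence has rank at most $\binom{n}{\le \lfloor d/2\rfloor}$. The analogous bound holds for the matrix coming from $Q_2$ by symmetry. Adding the two rank bounds gives $\lvert A\rvert = \rank(M) \le 2\binom{n}{\le \lfloor d/2\rfloor}$, which is the desired contrapositive.

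The main step where one must be careful is the rank-splitting: getting the floor in the exponent sharp requires checking the inequality $\min(\lvert U\rvert, \lvert V\rvert)\le \lfloor d/2\rfloor$ both for $d$ even and for $d$ odd, and verifying that each term of $P(x+y)$ is placed in exactly one of $Q_1, Q_2$ (so that the two rank bounds genuinely sum). Everything else is a direct instantiation of the Croot–Lev–Pach rank argument, with the diagonal-matrix input playing the role that the progression-indicator plays in their cap-set bound.
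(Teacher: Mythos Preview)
Your proof is correct and follows essentially the same approach as the paper: both use the indicator of $0$ on $A+A$, represent it by a degree-$d$ polynomial $P$, observe that the matrix $(P(x+y))_{x,y\in A}$ is the identity, and bound its rank via the Croot--Lev--Pach argument. The only cosmetic differences are that you argue by contrapositive and inline the CLP rank bound (expanding $P(x+y)$ and splitting by which of $U,V$ has small size), whereas the paper quotes the CLP lemma as a black box and frames the argument directly.
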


\section{Proof of Theorem~\ref{thm-intdeg}}

The main technical tool will be a lemma of Croot-Lev-Pach \citep{CLP17} that was the main ingredient in the recent solution of the cap-set problem \citep{EG17} and has found many other applications since then (e.g., \citep{Green16, Soly18, DB17, Fox17} to name a few).

\begin{lem}[CLP lemma \citep{CLP17}]\label{lem-clp}
	Let $P \in \F_q[x_1,\ldots,x_n]$ be a polynomial of degree at most $d$ over  any finite field $\F_q$,
	and let $M$ denote the $q^n \times q^n$ matrix with entries $M_{x,y} = P(x+y)$ for $x,y \in \F_q^n$. 
	Then $\rank(M) \leq 2\cdot m_{\lfloor d/2 \rfloor}(q,n),$ 
	where $m_{k}(q,n)$ denotes the number of monomials in $n$ variables $x_1,\ldots,x_n$ 
	such that each variable appears with individual degree at most $q-1$ and the total degree of the monomial is at most $k$.
\end{lem}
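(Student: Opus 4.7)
The plan is to realize $M$ as a sum of two explicitly low-rank matrices by expanding $P(x+y)$ and grouping terms according to which of $x$ or $y$ carries the smaller share of the total degree.

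First I would reduce $P$ to a convenient normal form. Since $M$ depends only on the values of $P$ on $\F_q^n$, and since $x_i^q=x_i$ for every $x_i\in\F_q$, I can iteratively replace $x_i^q$ by $x_i$ in $P$ without changing any entry of $M$ and without increasing total degree. After this reduction I may assume every monomial of $P$ has individual degrees at most $q-1$ and total degree at most $d$. Expanding each factor $(x_i+y_i)^{k}$ binomially (with $k\le q-1$, so no reduction is needed on the $x,y$ side), the substitution $x\mapsto x+y$ produces
\[
P(x+y) \;=\; \sum_{\alpha,\beta}\, c_{\alpha,\beta}\, x^\alpha y^\beta,
\]
where the indices range over multi-indices with individual entries at most $q-1$ and $|\alpha|+|\beta|\le d$.

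Because $|\alpha|+|\beta|\le d$ forces $\min(|\alpha|,|\beta|)\le \lfloor d/2\rfloor$, I split $P(x+y)=P_1(x,y)+P_2(x,y)$, where $P_1$ collects the terms with $|\alpha|\le \lfloor d/2\rfloor$ and $P_2$ collects the remaining terms (which then satisfy $|\beta|\le \lfloor d/2\rfloor$). Correspondingly $M=M_1+M_2$. The key step is that $P_1$ rewrites as
\[
P_1(x,y) \;=\; \sum_{\alpha\,:\,|\alpha|\le \lfloor d/2\rfloor}\, x^\alpha\, g_\alpha(y)
\]
for appropriate functions $g_\alpha$, which exhibits $M_1 = U_1 V_1$ with $U_1$ the $q^n\times N$ matrix whose $(x,\alpha)$ entry is $x^\alpha$ and $V_1$ the $N\times q^n$ matrix whose $(\alpha,y)$ entry is $g_\alpha(y)$, where $N$ is the number of admissible $\alpha$'s. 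Since every admissible $\alpha$ satisfies both $|\alpha|\le \lfloor d/2\rfloor$ and $\alpha_i\le q-1$, we get $N\le m_{\lfloor d/2\rfloor}(q,n)$, and hence $\rank(M_1)\le m_{\lfloor d/2\rfloor}(q,n)$. The symmetric argument applied to $P_2$ bounds $\rank(M_2)$ identically, and subadditivity of rank gives the conclusion.

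The only delicate point is bookkeeping: one must check that after the normal-form reduction of $P$ and the binomial expansion of $x\mapsto x+y$, the monomials $x^\alpha$ appearing in $P_1$ really do satisfy both constraints counted by $m_{\lfloor d/2\rfloor}(q,n)$. The individual-degree constraint $\alpha_i\le q-1$ is inherited from the normal form (each $(x_i+y_i)^k$ for $k\le q-1$ produces only $x_i$-powers of exponent $\le q-1$), and the total-degree constraint is built into the definition of $P_1$. Once this is verified the rest is a clean factorization, and I do not anticipate any other serious obstacle.
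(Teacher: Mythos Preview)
Your argument is correct and is precisely the standard Croot--Lev--Pach proof. The paper does not actually supply its own proof of this lemma---it is quoted from \cite{CLP17}---but the same idea (expand $P$ in the summed variables and group monomials by which block carries degree at most $\lfloor d/p\rfloor$) is exactly what the paper uses in its proof of the tensor generalization, Lemma~\ref{lem-clpmodp}. So your approach coincides with both the original source and the paper's own later argument.
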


Specializing to our setting of $\F_2$ multilinear polynomials, we see that $m_k(2,n) = {n \choose \leq k}$ and so we conclude:

\begin{cor}\label{cor-clp}
	Let $P \in \F_2[x_1,\ldots,x_n]$ be a polynomial of degree at most $d$  and let $M$ be as in Lemma~\ref{lem-clp}. Then $\rank(M) \leq 2 {n \choose \leq \lfloor d/2 \rfloor}$.
\end{cor}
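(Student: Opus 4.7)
The plan is to apply Lemma~\ref{lem-clp} with $q = 2$ and identify the resulting monomial count with the binomial sum $\binom{n}{\leq \lfloor d/2\rfloor}$. Since the CLP lemma already contains all the algebraic content, the corollary reduces to a purely combinatorial specialization; there is no additional tensor decomposition, rank estimate, or diagonalization to carry out.

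Concretely, I would first invoke Lemma~\ref{lem-clp} verbatim with $q=2$ on the given polynomial $P \in \F_2[x_1,\ldots,x_n]$ of total degree at most $d$. The lemma immediately yields
\[ \rank(M) \;\leq\; 2 \cdot m_{\lfloor d/2 \rfloor}(2, n), \]
where $m_k(2,n)$ counts monomials in $n$ variables in which each individual variable appears with degree at most $q-1 = 1$ and whose total degree is at most $k$.

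Next, I would identify $m_k(2,n)$ explicitly. A monomial in which each variable has individual degree at most $1$ is a multilinear monomial of the form $\prod_{i \in S} x_i$ indexed by a subset $S \subseteq [n]$, and its total degree equals $|S|$. Hence the constraint ``total degree at most $k$'' becomes ``$|S| \leq k$'', and
\[ m_k(2,n) \;=\; \#\{S \subseteq [n] : |S| \leq k\} \;=\; \sum_{i=0}^{k} \binom{n}{i} \;=\; \binom{n}{\leq k}. \]
Substituting $k = \lfloor d/2 \rfloor$ gives exactly the claimed bound.

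There is no real obstacle here. The one consistency check I would make is that applying Lemma~\ref{lem-clp} to a multilinear $P$ is unambiguous: the ``degree at most $d$'' condition in the CLP statement refers to total degree, which in the $\F_2$ setting coincides with the usual multilinear degree, so no reinterpretation of $P$ is required before quoting the lemma.
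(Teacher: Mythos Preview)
Your proposal is correct and matches the paper's own argument essentially verbatim: the paper simply states that specializing Lemma~\ref{lem-clp} to $q=2$ gives $m_k(2,n) = \binom{n}{\leq k}$, which is exactly the subset-counting identification you spell out.
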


We are now ready to prove Theorem~\ref{thm-intdeg}.

\begin{proof}[Proof of Theorem~\ref{thm-intdeg}]
Suppose $A \subset \F_2^n$ is such that $|A| \geq 2{n \choose \leq \lfloor{d}/{2}\rfloor}$. Let $f : A+A \mapsto \F_2$ be such that $f(\bar 0)=1$, 
where $\bar 0$ is the all zero vector in $\F_2^n$, and $f(a)=0$ for all non-zero $a \in A+A$. 
It suffices to show that $deg_{A+A}(f)\geq \lfloor{d}/{2}\rfloor$
({notice that since $A\neq\emptyset$} it follows that $\bar 0\in A+A$ and so~$f$ is not constantly $0$ on $A+A$).
Let $M$ be the $2^n \times 2^n$ matrix whose rows and columns are indexed by $\F_2^n$ and with entries $M_{x,y} = f(x+y)$. 
By our definition of $f$ we have that the sub-matrix of $M$ whose rows and columns are indexed by $A$ is just the $|A|\times |A|$ identity matrix. This implies $$ \rank(M) \geq |A|.$$ Let $d_f = \deg_{A+A}(f)$ denote the smallest degree of a polynomial whose restriction to $A+A$ computes~$f$. Applying Corollary~\ref{cor-clp} we get that 
\[ \rank(M) \leq 2{ n \choose \leq \lfloor d_f/2 \rfloor}.\]
Combining the two inequalities on $\rank(M)$ and using the bound on the size of $A$ we get that
\[2{n \choose \leq \lfloor{d}/{2}\rfloor} < |A| \leq \rank(M) \leq 2{ n \choose \leq \lfloor d_f/2 \rfloor}, \] 
which implies $\lfloor d/2 \rfloor < \lfloor d_f/2 \rfloor$. This means that $d_f > d$ and so $\intdeg(A+A) > d$.
\end{proof}

\section{Generalization to sums modulo $p$}\label{sec:modp}

In this section we observe that our proof can be generalized to give stronger bounds in the case when we take $p$-fold sums of boolean vectors over $\F_p$. The case proved in the last section corresponds to (two fold) sums modulo 2. For a subset $A \subset \F_p^n$ and a positive integer $k$, we denote by $$k\cdot A = \{ a_1 + \ldots + a_k \,|\, a_i \in A \}$$ the $k$-fold sumset of $A$. To formally define the interpolation degree over $\F_p$ we need to consider, instead of multilinear polynomials, polynomials in which each variable has degree at most $p-1$. We call such polynomials {\em $p$-reduced} polynomials. The space of all $p$-reduced polynomials has dimension $p^n$ and  can uniquely represent any function $f: \F_p^n \mapsto \F_p$. The degree of such a function is defined to be the total degree of the unique $p$-reduced polynomial representing it and can range between 0 and $(p-1)n$. The interpolation degree of a set $A \subset \F_q^n$ is the minimum $d$ such that any function $f : A \mapsto \F_p$ can be represented by a $p$-reduced polynomial of degree at most $d$. To avoid confusion we will denote the interpolation degree over $\F_p^n$ as $\intdeg_p(A)$.

We denote by $\cM_d(p,n)$ the set of monomials in $n$ variables $x_1,\ldots,x_n$ in which each variables has degree at most $p-1$ and the total degree is at most $d$. When $p=2$ we have the closed formula $|\cM_d(2,n)| = {n \choose \leq d}$. When $p >2$ the quantity $|\cM_d(p,n)|$ is a bit more tricky to compute but is known to satisfy certain asymptotic inequalities (e.g., large deviations \citep{LDP15} showing that $\cM_{\delta n}(p,n) \leq 2^{\eps n}$ with $\eps(\delta)$ going to zero with $\delta$). 

The following theorem generalizes Theorem~\ref{thm-main} when $p > 2$.
\begin{thm}\label{thm-psums}
Let $p$ be any prime number and let $A \subset \{0,1\}^n \subset \F_p^n$	be such that $|A| > p\cdot |\cM_{\lfloor d/p \rfloor}(p,n)|$. Then $\intdeg_p(p \cdot A) > d$.
\end{thm}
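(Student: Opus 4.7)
The plan is to mirror the proof of Theorem~\ref{thm-intdeg}, but replace the rank of a $2$-tensor (matrix) with the \emph{slice rank} of a $p$-tensor. The Croot-Lev-Pach argument generalizes (as observed by Tao in the context of the cap-set problem) to $p$-fold sumsets: the polynomial $P(x_1+\cdots+x_p)$ can be decomposed into $p$ slices corresponding to the $p$ variables, each of low individual degree by a pigeonhole on the total degree.

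First I would fix a "delta" function $f\colon p\cdot A\to\F_p$ defined by $f(\bar 0)=1$ and $f(v)=0$ for all non-zero $v\in p\cdot A$; note $\bar 0\in p\cdot A$ since $A\neq\emptyset$ and $p\cdot a=\bar 0$ in $\F_p^n$ for any $a\in\{0,1\}^n$. Let $d_f=\deg_{p\cdot A}(f)$ and let $P$ be a $p$-reduced polynomial of degree $d_f$ realizing $f$ on $p\cdot A$. Define the $p$-tensor $T\colon A^p\to\F_p$ by $T_{x_1,\ldots,x_p}=P(x_1+\cdots+x_p)=f(x_1+\cdots+x_p)$. The key combinatorial observation, and the place where the hypothesis $A\subset\{0,1\}^n$ is used, is that for $x_1,\ldots,x_p\in\{0,1\}^n$, the coordinate-wise sum $x_1+\cdots+x_p$ vanishes modulo $p$ iff each coordinate is either $0$ in all $x_i$ or $1$ in all $x_i$; equivalently, iff $x_1=\cdots=x_p$. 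Thus $T$ restricted to $A^p$ is the \emph{diagonal tensor} whose diagonal is all-$1$ and off-diagonal is $0$. A standard fact (essentially Tao's slice-rank lemma) is that such a diagonal tensor has slice rank exactly $\lvert A\rvert$, giving the lower bound $\mathrm{slice\text{-}rank}(T)\geq\lvert A\rvert$.

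For the upper bound on slice rank, I would expand $P(x_1+\cdots+x_p)$ as a polynomial in the $pn$ variables $\{x_{i,j}\}_{i\leq p,\,j\leq n}$ of total degree at most $d_f$. Every monomial appearing in this expansion has the form $\prod_{i,j}x_{i,j}^{\beta_{i,j}}$ with $\sum_{i,j}\beta_{i,j}\leq d_f$, so by pigeonhole there exists some index $i\in\{1,\ldots,p\}$ for which the partial degree $\sum_j\beta_{i,j}\leq\lfloor d_f/p\rfloor$. Grouping terms according to such an $i$ (breaking ties arbitrarily) and then $p$-reducing each $x_i$-factor using $y^p=y$ over $\F_p$ yields a decomposition
\[
P(x_1+\cdots+x_p)\;=\;\sum_{i=1}^{p}\sum_{M\in\cM_{\lfloor d_f/p\rfloor}(p,n)}M(x_i)\cdot G_{i,M}(x_1,\ldots,\hat{x_i},\ldots,x_p),
\]
valid on $A^p\subseteq(\F_p^n)^p$. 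This is a presentation of $T$ as a sum of at most $p\cdot\lvert\cM_{\lfloor d_f/p\rfloor}(p,n)\rvert$ slice-rank-one tensors, so $\mathrm{slice\text{-}rank}(T)\leq p\cdot\lvert\cM_{\lfloor d_f/p\rfloor}(p,n)\rvert$.

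Combining both bounds with the hypothesis gives
\[
p\cdot\lvert\cM_{\lfloor d/p\rfloor}(p,n)\rvert\;<\;\lvert A\rvert\;\leq\;p\cdot\lvert\cM_{\lfloor d_f/p\rfloor}(p,n)\rvert,
\]
which forces $\lfloor d_f/p\rfloor>\lfloor d/p\rfloor$, hence $d_f\geq p\lfloor d/p\rfloor+p>d$, completing the proof that $\intdeg_p(p\cdot A)>d$. The main obstacle (and the place where the argument is genuinely different from the $p=2$ case) is the upper bound on slice rank: one must commit to the slice-rank framework for $p$-tensors rather than ordinary matrix rank, and carefully justify the pigeonhole decomposition together with the $p$-reduction of the $x_i$-slice monomials so that the bound involves $\cM_{\lfloor d_f/p\rfloor}(p,n)$ rather than a larger set of non-reduced monomials.
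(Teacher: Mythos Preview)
Your proposal is correct and follows essentially the same route as the paper: define the delta function at $\bar 0$ on $p\cdot A$, observe that for $\{0,1\}^n$ vectors the $p$-fold sum vanishes iff all summands coincide so the associated $p$-tensor is diagonal with slice rank $|A|$ by Tao's lemma, and bound the slice rank from above via the Croot--Lev--Pach pigeonhole decomposition of $P(x_1+\cdots+x_p)$ into $p$ blocks of monomials each indexed by $\cM_{\lfloor d_f/p\rfloor}(p,n)$. Your final inequality chain and the deduction $d_f>d$ are exactly as in the paper; the extra remark about $p$-reducing the $x_i$-slice monomials is harmless (indeed the expansion of a $p$-reduced $P$ already yields individual degrees $\leq p-1$ in each coordinate).
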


The proof of the theorem requires the notion of {\em slice-rank} of a tensor which was introduced by Tao in his symmetric interpretation of the proof of the cap-set conjecture \citep{Tao-capset}. By a $k$-fold tensor of dimension $D$ over a field $\F$ we mean a function $T$ mapping  ordered tuples $(j_1,\ldots,j_k) \in [D]^n$ to $\F$.  The {\em slice-rank} of a $k$-fold tensor $T$ is a the smallest integer $R$ such that $T$ can be written as a sum $T = \sum_{i=1}^R T_i$ such that, for every $i \in [R]$ there is some $j_i \in [k]$ so that $T_i(j_1,\ldots,j_k)= A(j_i)B(j_1,\ldots,j_{i-1},j_{i+1},\ldots,j_k)$. In other words, we define the `rank one' tensors to be those in which the dependence on one of the variables is multiplicative (by a function $A(j_i)$) and the rank of a tensor is the smallest number of rank one tensors needed to describe it. For 2-fold tensors (or matrices) this notion coincides with the usual definition of matrix rank. 

The proof of Theorem~\ref{thm-psums} will follow from a combination of two lemmas regarding slice rank. The first lemma generalizes the Croot-Lev-Pach lemma (and proved in an a similar way).

\begin{lem}\label{lem-clpmodp}
Let $f : \F_p^n \mapsto \F_p$ be of degree $d$. Then the $p$-fold $p^n$ dimensional tensor $T: (\F_p^n)^k \mapsto \F_p$ defined by $T(X^1,\ldots,X^p) = f(X^1 + \ldots +X^p)$ has slice rank at most $p\cdot \cM_{\lfloor d/p \rfloor}(p,n)$.
\end{lem}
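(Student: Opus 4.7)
The plan is to mirror the Croot--Lev--Pach argument, but replace the two-fold pigeonhole with a $p$-fold version. First I would write $f$ in its $p$-reduced form $f(y) = \sum_{\alpha} c_\alpha \, y^\alpha$ with $\alpha \in \{0,1,\ldots,p-1\}^n$ and $|\alpha| := \sum_i \alpha_i \leq d$. Substituting $y_i = X^1_i + \cdots + X^p_i$ and expanding via the multinomial theorem yields a polynomial in the $pn$ variables $\{X^j_i : j\in[p],\, i\in[n]\}$ whose every monomial has total degree at most $d$, since substituting linear forms cannot raise the degree. Since we only care about the values on $(\F_p^n)^p$, I would then reduce individual exponents modulo $(X^j_i)^p - X^j_i$; this preserves the function $T$ on $(\F_p^n)^p$ and can only decrease degrees. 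The upshot is an identity
\[
T(X^1,\ldots,X^p) \;=\; \sum_{(M_1,\ldots,M_p)} c_{M_1,\ldots,M_p} \prod_{j=1}^p M_j(X^j),
\]
where each $M_j$ is a $p$-reduced monomial in the variables of $X^j$ and $\sum_{j=1}^p \deg(M_j) \leq d$.

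The combinatorial heart of the argument is the pigeonhole principle: since the $p$ block-degrees sum to at most $d$, at least one index $j^\star$ satisfies $\deg(M_{j^\star}) \leq \lfloor d/p\rfloor$. For each term of the expansion I would select one such $j^\star$, breaking ties (say) by taking the smallest such $j$, and then regroup the sum by the pair $(j^\star, M_{j^\star})$. For every fixed $j \in [p]$ and $M \in \cM_{\lfloor d/p \rfloor}(p,n)$, the collection of terms assigned to the pair $(j,M)$ factors as
\[
M(X^j) \cdot g_{j,M}\!\left(X^1,\ldots,X^{j-1},X^{j+1},\ldots,X^p\right),
\]
where $g_{j,M}$ absorbs the coefficients and the remaining block monomials. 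This is exactly a rank-one slice in the slice-rank sense.

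Summing over all such pairs expresses $T$ as a sum of at most $p \cdot |\cM_{\lfloor d/p\rfloor}(p,n)|$ rank-one slices, yielding the claimed bound. I expect the main obstacle to be bookkeeping rather than any conceptual subtlety: one must apply the reduction modulo $(X^j_i)^p - X^j_i$ \emph{before} invoking pigeonhole (so that the selected $M_{j^\star}$ genuinely lies in $\cM_{\lfloor d/p\rfloor}(p,n)$), and the tie-breaking rule must yield a well-defined partition of the terms so the groups do not double-count. Once these are handled, the argument is a direct $p$-fold analogue of Lemma~\ref{lem-clp}, with the factor of $p$ in the bound arising from the $p$ possible choices of the distinguished low-degree block.
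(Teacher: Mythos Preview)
Your proposal is correct and follows essentially the same approach as the paper: expand $f(X^1+\cdots+X^p)$ as a polynomial in the $p$ blocks, observe via pigeonhole that each monomial has some block of degree at most $\lfloor d/p\rfloor$, and group accordingly. One minor remark: the reduction modulo $(X^j_i)^p - X^j_i$ that you flag as a potential obstacle is in fact automatic here, since each $y_i^{\alpha_i}$ with $\alpha_i\le p-1$ expands into terms where every $X^j_i$ has exponent at most $\alpha_i\le p-1$; so the expanded polynomial is already $p$-reduced in each variable and no further reduction is needed.
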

\begin{proof}
Consider $T$ as a polynomial in $p$ groups of variables $X^i = (x^i_1,\ldots,x^i_n)$ with $i = 1,2,\ldots,p$. Since the degree of $f$ is $d$, the degree of $T$ as a polynomial will also be at most $d$. This means that, in each monomial of $T(X^1,\ldots,X^p) = f(X^1 + \ldots +X^p)$, the degree of at least one group of variables will be at most $\lfloor d/p \rfloor$. Grouping together  monomials according to which group has low degree (if there is more than one group take the one with lowest index) we can represent $T$ as a sum of $p$ tensors, each having rank at most $\cM_{\lfloor d/p \rfloor}(p,n)$. This completes the proof.
\end{proof}

The second lemma needed to prove Theorem~\ref{thm-psums} is due to Tao and shows that the 'diagonal' tensor has full rank.
\begin{lem}[\cite{Tao-capset}]\label{lem-taorank}
Let $\delta(j_1,\ldots,j_k): [D]^k \mapsto \F$ be defined as $\delta(j,j,\ldots,j) = 1$ for all $j$ and is zero otherwise. Then the slice rank of $\delta$ is equal to $D$.
\end{lem}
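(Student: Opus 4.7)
The upper bound $\leq D$ is immediate, since
\[
\delta(j_1,\ldots,j_k) \;=\; \sum_{j=1}^{D}\bigl[j_1=j\bigr]\cdot\prod_{t=2}^{k}\bigl[j_t=j\bigr]
\]
already exhibits $\delta$ as a sum of $D$ rank-$1$ slices in direction $1$ (here $[\cdot]$ denotes the Iverson bracket). For the lower bound I would induct on $k\geq 2$; the base case $k=2$ is the fact that slice rank coincides with the usual matrix rank, and $\delta$ is then the $D\times D$ identity matrix.

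For the inductive step from $k$ to $k+1$, consider any decomposition
\[
\delta_{[D]^{k+1}} \;=\; \sum_{s=1}^{k+1}\sum_{l=1}^{r_s} a_{s,l}(j_s)\,b_{s,l}(j_{-s}),\qquad R=\sum_s r_s,
\]
arranged so that $\{a_{k+1,l}\}_l$ is linearly independent in $\F^{[D]}$. The plan is to eliminate direction $k+1$ by contracting against a carefully chosen test function $c\in V_{k+1}^{\perp}$, where $V_{k+1}=\mathrm{span}\{a_{k+1,l}\}$. I would first choose a pivot set $P\subseteq[D]$ of size $r_{k+1}$ on which $V_{k+1}$ restricts isomorphically onto $\F^{P}$; the restriction map $V_{k+1}^{\perp}\to\F^{[D]\setminus P}$ is then a bijection, so there is a unique $c\in V_{k+1}^{\perp}$ with $c(j)=1$ for every $j\in[D]\setminus P$.

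Now contract both sides of the decomposition against $c$ in the $(k+1)$-th variable. Slices in direction $k+1$ contribute $\langle c,a_{k+1,l}\rangle\,b_{k+1,l}=0$, while each slice in direction $s\leq k$ becomes a rank-$1$ slice of a $k$-tensor in the same direction, so the contracted tensor has slice rank at most $R-r_{k+1}$. On the left the contraction produces $T'(j_1,\ldots,j_k)=c(j_1)\cdot[j_1=\cdots=j_k]$, a diagonal $k$-tensor with value $c(j)$ at $(j,\ldots,j)$. Since invertible diagonal rescaling in a single direction carries rank-$1$ slices (in any direction) to rank-$1$ slices and hence preserves slice rank, rescaling direction $1$ by $c(j)^{-1}$ on $S=\mathrm{supp}(c)$ (extended arbitrarily off $S$) transforms $T'$ into $\delta_{S^k}$, whose slice rank equals $|S|$ by the inductive hypothesis. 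Because $[D]\setminus P\subseteq S$ we have $|S|\geq D-r_{k+1}$, and combining with the upper bound gives $D-r_{k+1}\leq |S|\leq R-r_{k+1}$, i.e.\ $R\geq D$. The one slightly delicate point, beyond routine bookkeeping, is verifying that the slice rank of $T'$ is genuinely $|S|$: this is where the rescaling-invariance observation and the inductive hypothesis on the smaller diagonal tensor $\delta_{S^k}$ come together.
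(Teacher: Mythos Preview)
The paper does not give a proof of this lemma; it is simply stated with a citation to Tao's blog post. So there is no ``paper's own proof'' to compare against. Your argument is correct and is essentially Tao's original proof: induct on $k$, contract the $(k{+}1)$-tensor in the last coordinate against a vector $c$ annihilating the direction-$(k{+}1)$ slice functions, and apply the inductive hypothesis to the resulting diagonal $k$-tensor.

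Two small points worth tightening. First, the phrase ``arranged so that $\{a_{k+1,l}\}_l$ is linearly independent'' deserves one sentence of justification: if the $a_{k+1,l}$ are dependent, a linear relation lets you absorb one term into the others, strictly decreasing $R$; hence in a decomposition of minimal size they are automatically independent, and it suffices to bound $R$ for a minimal decomposition. Second, after the invertible rescaling you do not literally obtain $\delta_{S^k}$ but rather a tensor on $[D]^k$ that is zero off the diagonal and equals $1$ at $(j,\ldots,j)$ precisely for $j\in S$. What you actually use is that restricting this tensor to $S^k$ gives $\delta_{S^k}$, and restriction cannot increase slice rank; together with the inductive hypothesis this yields slice rank $\geq |S|$, which is the inequality you need. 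With these clarifications the proof is complete.
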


\begin{proof}[Proof of Theorem~\ref{thm-psums}]
To prove the bound on $\intdeg_p(p\cdot A)$ we describe a function $f : p\cdot A \mapsto \F_p$ that cannot be represented by a low degree polynomial. We take $f$ to be equal to $1$ on the zero vector and zero otherwise. We now consider the tensor $T(X^1,\ldots,X^p) = f(X^1 + \ldots +X^p)$ defined on $A^p$. Notice that, since $A \subset \{0,1\}^n$, the sum of $p$ of them is equal to zero iff all $p$ summands are identical. This implies that $T$ is the diagonal tensor $\delta$ of Lemma~\ref{lem-taorank} and hence has rank equal to $|A|$. On the other hand, if the degree of $f$ (over $p\cdot A$) is at most $d$ then, by Lemma~\ref{lem-clpmodp}, the tensor $T$ has rank at most $p\cdot \cM_{\lfloor d/p \rfloor}(p,n)$. Since we assume that $|A| > p\cdot \cM_{\lfloor d/p \rfloor}(p,n)$ this cannot happen and so $\intdeg_p(pA) > d$.
\end{proof}

\bibliographystyle{plainnat}
\bibliography{refs}

\begin{thebibliography}{16}
\providecommand{\natexlab}[1]{#1}
\providecommand{\url}[1]{\texttt{#1}}
\expandafter\ifx\csname urlstyle\endcsname\relax
  \providecommand{\doi}[1]{doi: #1}\else
  \providecommand{\doi}{doi: \begingroup \urlstyle{rm}\Url}\fi

\bibitem[Babai and Frankl(1992)]{babai92book}
L~Babai and P~Frankl.
\newblock \emph{{Linear Algebra Methods in Combinatorics}}.
\newblock University of Chicago, 1992.

\bibitem[{Blumer} et~al.(1989){Blumer}, {Ehrenfeucht}, {Haussler}, and
  {Warmuth}]{blumer89VC}
A.~{Blumer}, A.~{Ehrenfeucht}, D.~{Haussler}, and M.~K. {Warmuth}.
\newblock {Learnability and the Vapnik-Chervonenkis dimension.}
\newblock \emph{{J. Assoc. Comput. Mach.}}, 36\penalty0 (4):\penalty0 929--965,
  1989.
\newblock ISSN 0004-5411.
\newblock \doi{10.1145/76359.76371}.

\bibitem[Croot et~al.(2017)Croot, Lev, and Pach]{CLP17}
Ernie Croot, Vsevolod~F. Lev, and P\'eter~P\'al Pach.
\newblock Progression-free sets in {$\Bbb Z^n_4$} are exponentially small.
\newblock \emph{Ann. of Math. (2)}, 185\penalty0 (1):\penalty0 331--337, 2017.
\newblock ISSN 0003-486X.
\newblock \doi{10.4007/annals.2017.185.1.7}.
\newblock URL \url{https://doi.org/10.4007/annals.2017.185.1.7}.

\bibitem[Dvir and Edelman(2017)]{DB17}
Zeev Dvir and Benjamin Edelman.
\newblock Matrix rigidity and the croot-lev-pach lemma.
\newblock Manuscript, 08 2017.

\bibitem[Ellenberg and Gijswijt(2017)]{EG17}
Jordan~S. Ellenberg and Dion Gijswijt.
\newblock On large subsets of {$\Bbb F^n_q$} with no three-term arithmetic
  progression.
\newblock \emph{Ann. of Math. (2)}, 185\penalty0 (1):\penalty0 339--343, 2017.
\newblock ISSN 0003-486X.
\newblock \doi{10.4007/annals.2017.185.1.8}.
\newblock URL \url{https://doi.org/10.4007/annals.2017.185.1.8}.

\bibitem[Fox and Lov\'{a}sz(2017)]{Fox17}
Jacob Fox and L\'{a}szl\'{o}~Mikl\'{o}s Lov\'{a}sz.
\newblock A tight bound for green's arithmetic triangle removal lemma in vector
  spaces.
\newblock In \emph{Proceedings of the Twenty-Eighth Annual ACM-SIAM Symposium
  on Discrete Algorithms}, SODA '17, pages 1612--1617, Philadelphia, PA, USA,
  2017. Society for Industrial and Applied Mathematics.
\newblock URL \url{http://dl.acm.org/citation.cfm?id=3039686.3039792}.

\bibitem[Green(2016)]{Green16}
Ben Green.
\newblock Sarkozy's theorem in function fields.
\newblock \emph{Quarterly Journal of Mathematics}, 68, 2016.

\bibitem[Gurvits(1997)]{gurvits97linear}
Leonid Gurvits.
\newblock {Linear algebraic proofs of VC-dimension based inequalities}.
\newblock In \emph{Computational Learning Theory}, pages 238--250. Springer,
  1997.

\bibitem[Moran and Rashtchian(2016)]{moran16shattered}
Shay Moran and Cyrus Rashtchian.
\newblock Shattered sets and the hilbert function.
\newblock In \emph{{MFCS}}, volume~58 of \emph{LIPIcs}, pages 70:1--70:14.
  Schloss Dagstuhl - Leibniz-Zentrum fuer Informatik, 2016.

\bibitem[Rassoul-Agha and Sepp{\"a}l{\"a}inen(2015)]{LDP15}
Firas Rassoul-Agha and Timo Sepp{\"a}l{\"a}inen.
\newblock \emph{A course on large deviations with an introduction to Gibbs
  measures}.
\newblock American Mathematical Society, 05 2015.
\newblock ISBN 978-0-8218-7578-0.

\bibitem[Remscrim(2016)]{Remscrim16}
Z.~Remscrim.
\newblock The hilbert function, algebraic extractors, and recursive fourier
  sampling.
\newblock In \emph{2016 IEEE 57th Annual Symposium on Foundations of Computer
  Science (FOCS)}, pages 197--208, Oct 2016.
\newblock \doi{10.1109/FOCS.2016.29}.

\bibitem[{Sauer}(1972)]{sauer72density}
N.~{Sauer}.
\newblock {On the density of families of sets.}
\newblock \emph{{J. Comb. Theory, Ser. A}}, 13:\penalty0 145--147, 1972.
\newblock ISSN 0097-3165.
\newblock \doi{10.1016/0097-3165(72)90019-2}.

\bibitem[Shelah(1972)]{shelah72comb}
Saharon Shelah.
\newblock A combinatorial problem; stability and order for models and theories
  in infinitary languages.
\newblock \emph{Pacific J. Math.}, 41\penalty0 (1):\penalty0 247--261, 1972.
\newblock URL \url{https://projecteuclid.org:443/euclid.pjm/1102968432}.

\bibitem[Smolensky(1997)]{smolensky97vc}
Roman Smolensky.
\newblock Well-known bound for the vc-dimension made easy.
\newblock \emph{Computational Complexity}, 6\penalty0 (4):\penalty0 299--300,
  1997.
\newblock \doi{10.1007/BF01270383}.
\newblock URL \url{http://dx.doi.org/10.1007/BF01270383}.

\bibitem[Solymosi(2018)]{Soly18}
Jozsef Solymosi.
\newblock The sum of nonsingular matrices is often nonsingular.
\newblock \emph{Linear Algebra and its Applications}, 552, 01 2018.

\bibitem[Tao(2016)]{Tao-capset}
T.~Tao.
\newblock {A symmetric formulation of the {C}root - {L}ev - {P}ach -
  {E}llenberg - {G}ijswijt capset bound}.
\newblock
  https://terrytao.wordpress.com/2016/05/18/a-symmetric-formulation-of-the-croot-lev-pach-ellenberg,
  2016.

\end{thebibliography}

\end{document}